\def\BibTeX{{\rm B\kern-.05em{\sc i\kern-.025em b}\kern-.08em
    T\kern-.1667em\lower.7ex\hbox{E}\kern-.125emX}}
\newcommand{\distas}[1]{\mathbin{\overset{#1}{\kern\z@\sim}}}%
\newsavebox{\mybox}\newsavebox{\mysim}
\newcommand{\distras}[1]{%
  \savebox{\mybox}{\hbox{\kern3pt$\scriptstyle#1$\kern3pt}}%
  \savebox{\mysim}{\hbox{$\sim$}}%
  \mathbin{\overset{#1}{\kern\z@\resizebox{\wd\mybox}{\ht\mysim}{$\sim$}}}%
}
\newtheorem{theorem}{Theorem}
\newtheorem{lemma}{Lemma}
\newtheorem{definition}{Definition}
\newtheorem{assumption}{Assumption}
\newtheorem*{remark}{Remark}
\DeclareMathOperator*{\argmin}{arg\,min}
\title{Moreau Envelope ADMM for Decentralized Weakly Convex Optimization}
 \author{Reza Mirzaeifard$^\dagger$, Naveen K. D. Venkategowda$^\S$, Alexander Jung$^\star$, Stefan Werner$^\dagger$ \thanks{This work was supported by the Research Council of Norway.}\\
 $^\dagger$Department of Electronic Systems, Norwegian University of Science and Technology, Norway\\$^\S$Department of Science and Technology,  Linköping University, Sweden\\ $^\star$Department of Computer Science, Aalto University, Finland
   \\E-mails: \{reza.mirzaeifard, stefan.werner\}@ntnu.no, naveen.venkategowda@liu.se, alex.jung@aalto.fi
 }
\begin{document}
\maketitle
\begin{abstract}
This paper proposes a proximal variant of the alternating direction method of multipliers (ADMM) for distributed optimization. Although the current versions of ADMM algorithm provide promising numerical results  in producing solutions that are close to optimal for many convex and non-convex optimization problems, it remains unclear if they can converge to a stationary point for weakly convex and locally non-smooth functions. Through our analysis using the Moreau envelope function, we demonstrate that MADM can indeed converge to a stationary point under mild conditions. Our analysis also includes computing the bounds on the amount of change in the dual variable update step by relating the gradient of the Moreau envelope function to the proximal function. Furthermore, the results of our numerical experiments indicate that our method is faster and more robust than widely-used approaches. 
\end{abstract}
\begin{IEEEkeywords}
Distributed optimization, non-convex and non-smooth optimization, weakly convex functions, ADMM, Moreau envelope.
\end{IEEEkeywords}
\section{Introduction}
\label{sec:intro}
Many systems, like the internet-of-things (IoT) and cyber-physical systems, comprise distributed devices and sensors that gather data for inference and decision-making. Building distributed models in such systems without data transfer to a central hub calls for distributed optimization methods involving peer-to-peer interactions. In addition, these methods allow for coping with resource constraints, e.g.,  computational resources, battery power, communication bandwidth, and privacy protection \cite{gogineni2022communication,venkategowda2020privacy,wang2019privacy}.

There is a large body of work on distributed optimization methods from different perspectives. The most direct approach to the design of distribution optimization methods is via message-passing implementations of subgradient computation within subgradient descent methods \cite{nedic2009distributed,nedic2017achieving,makhdoumi2017convergence}. Gradient methods are generalized to solve structured optimization problems using proximal methods \cite{parikh2014proximal,boyd2011distributed}. Additionally, variational methods for probabilistic models lend themselves naturally to optimization algorithms, such as variants of belief propagation \cite{wainwright2008graphical}. The subgradient method is well-known for its ease of implementation, wherein a subgradient is taken at each step, followed by an average with neighbors. On the downside, subgradient descent has a sublinear convergence rate and requires tuning of the step size. Meanwhile, ADMM performs fast and accurately in many practical convex and non-convex optimization problems. For convex objective functions, subgradient methods and ADMM are guaranteed to converge to a global optimum under suitable parameter choices \cite{makhdoumi2017convergence,lobel2010distributed,xin2020decentralized}. However, the analysis of these methods for non-convex problems  is challenging due to the potential ill-behavior of the objective function. The convergence analysis for ADMM in non-convex problems is particularly more challenging as it requires analyzing the convergence of multiple sub-problems with different structures and assumptions.

One important family of non-convex optimization problems is weakly convex problems. Several such problems arise in machine learning, including robust phase retrieval \cite{qian2017inexact}, blind deconvolution \cite{levin2011understanding}, biconvex compressive sensing \cite{ling2015self}, and dictionary learning \cite{davis2019stochastic}. Smooth functions, or functions with Lipschitz continuous gradients, are weakly convex functions.  Several non-convex optimization algorithms are proposed based on the smoothness assumption (e.g., \cite{yi2022sublinear,hong2017prox}); however, weakly convex functions are not restricted to smooth functions, and non-smooth functions can also be weakly convex \cite{duchi2019solving}. Existing work on distributed optimization of weakly convex functions includes \cite{chen2021distributed}. However, in the subgradient-based algorithm in \cite{chen2021distributed}, the local functions must satisfy the sharpness assumption, and the accuracy of the estimation depends on the step size. Although ADMM is a powerful algorithm applicable to many problems, it is not currently used to solve weakly convex problems. As seen in \cite{wang2019global, hong2016convergence, yashtini2022convergence, themelis2020douglas}, several ADMM-based works study non-convex optimization; however, these works require a smooth objective function. It is still necessary to provide a distributed ADMM-based algorithm that could work in the weakly convex setting without having any Lipschitz differentiability condition. 

We propose a Moreau envelope-based ADMM (MADM), suitable for distributed optimization where local objectives are weakly convex and not necessarily smooth. We chose the Moreau envelope-based ADMM approach because it allows us to guarantee a decrease in each primal update step and bound the amount of change in the dual update step by primary variables. This is achieved by incorporating the relationship between the Moreau envelope function and proximal function. Therefore, by selecting appropriate penalty parameters under mild conditions, including weakly convexity of each local function, a connected network, and the boundness of augmented Lagrangian function, we can ensure that the algorithm converges to a stationary point. The Moreau envelope-based ADMM approach stands out from other penalty-based ADMM algorithms due to its superior theoretical convergence properties. We conduct illustrative numerical experiments to verify the convergence properties of the proposed method. The experiments demonstrate the robustness of the proposed algorithm when we fix the penalty parameters and step size, and the problem structure remains the same. Unlike subgradient-based methods, the MADM approach ensures faster and more reliable convergence in this setting.

\noindent\textit{\textbf{Mathematical Notations}}:  
Scalars, column vectors, and matrices are respectively denoted by lowercase, bold lowercase, and bold uppercase letters. The operator $(\cdot)^\text{T}$ denotes transpose of a matrix, and the $j$th column of  matrix $\mathbf{A}$ is denoted by $\mathbf{A}_{j}$. The set $\{1,\; \cdots,\; L\}$ is denoted by $[L]$. For a function $h:\mathbb{R}^n \rightarrow \mathbb{R}$ and penalty parameter $\gamma>0$, $\mathcal{M}_{h}(\mathbf{w};\gamma)= \min_{\mathbf{x}} \big\{h(\mathbf{x})+\frac{1}{2\gamma}\|\mathbf{x}-\mathbf{w}\|_2^2 \big\}$ is the Moreau envelope function \cite{moreau1965proximite}, and  $\text{Prox}_{h}(\mathbf{w};\gamma)= \argmin_\mathbf{x} \big \{h(\mathbf{x})+\frac{1}{2\gamma}\|\mathbf{x}-\mathbf{w}\|^2\big \}$ is its associated proximal operator. 
\section{Problem formulation}
Suppose $L$ agents solve the following problem:
\begin{equation}\label{eq:1}
    \min_{\mathbf{x}} \sum_{i=1}^L f_i(\mathbf{x}),
\end{equation}
where  $f_i(\cdot):\mathbb{R}^N\rightarrow \mathbb{R}$ represents the local objective function that is only known to agent $i$. Additionally, each agent may exchange information with its neighbors through the underlying undirected communication network, which can be modeled as a graph $\mathcal{G}=(\mathcal{V},\mathcal{E})$, where $\mathcal{V}=[L]$ represents the set of agents and $\mathcal{E} \subset \mathcal{V} \times \mathcal{V}$ represents the set of edges.  In other words, the existence of $e_{i,j}\in \mathcal{E}$ indicates that $i$ and $j$ can exchange information. Due to the fact that both $e_{i,j}$ and $e_{j,i}$ denote the same edge, we merely use the expressions $e_{i,j}$ (if $i<j$) or $e_{j,i}$ (if $j < i$) to avoid repetition. Additionally, $E=|\mathcal{E}|$ is the total number of edges, and $|\mathcal{N}_i|$ is the number of neighbors for node $i$ in which $\mathcal{N}_i$ is its set of neighbors. 
In order to apply ADMM, one can rewrite \eqref{eq:1} in the form of an edge consensus problem as follows:
\begin{alignat}{2}\label{eq:2}
&
\!\min_{\{\mathbf{x}_1,\cdots,\mathbf{x}_L,\mathbf{Z}\}} &\qquad& \sum_{i=1}^{L} f_i(\mathbf{x}_i)+g(\mathbf{Z})\\ \noindent \nonumber
&\text{subject to}  &     & \mathbf{x}_i= \mathbf{z}_{i,j}, 
 \mathbf{x}_j=\mathbf{z}_{i,j}, \quad \forall e_{i,j}\in\mathcal{E}
\end{alignat}
where each $\mathbf{Z}=\{\{\mathbf{z}_{i,j}\}_{j\in \mathcal{N}_i,j>i}\}_{i=1}^{L}$ are  auxiliary variables, and $g(\cdot)=0$. The augmented Lagrangian of \eqref{eq:2} is:
\begin{multline}
      \mathcal{L}_{\rho_{\lambda}}(\mathbf{X},\mathbf{Z},\boldsymbol{\lambda})=\sum_{i=1}^{L} f_i(\mathbf{x}_i)+ \sum_{e_{i,j}\in \mathcal{E}}\Big( (\boldsymbol{\lambda}_{i,j}^{i})^{\text{T}}(\mathbf{x}_i-\mathbf{z}_{i,j})\\
 +(\boldsymbol{\lambda}_{i,j}^{j})^{\text{T}}(\mathbf{x}_j-\mathbf{z}_{i,j})+\frac{\rho_{\lambda}}{2}\|\mathbf{x}_j-\mathbf{z}_{i,j}\|^2+\frac{\rho_{\lambda}}{2}\|\mathbf{x}_i-\mathbf{z}_{i,j}\|^2 \Big),
\end{multline}
where $\mathbf{X}=[\mathbf{x}_1,\cdots,\mathbf{x}_L]$,   $\boldsymbol{\lambda}=\{\{\boldsymbol{\lambda}_{i,j}^{i},\boldsymbol{\lambda}_{i,j}^{j}\}_{j\in \mathcal{N}_i,j>i}\}_{i=1}^{L}$  are dual variables, and $\rho_{\lambda}$ is a penalty parameter.  Distributed ADMMs are iterative procedures that involve three steps at each iteration. The first step is to minimize $\mathcal{L}_{\rho_{\lambda}}$ with respect to $\mathbf{X}$. Afterward, $\mathcal{L}_{\rho_{\lambda}}$ is minimized based on $\mathbf{Z}$. In the last step, a dual gradient-ascent iteration is used to update $\boldsymbol{\lambda}$. 

\begin{definition}
A function $f(x)$ is $\rho-$weakly convex $(\rho > 0)$ if there exists a convex function $h(x)$ such that $h(x)= f(x)+\rho \|x\|^2$.
\end{definition}
Weakly convex local functions pose a challenge to distributed ADMM convergence because it can both be non-convex and non-smooth. In the absence of Lipschitz differentiability, which is coming from smoothness, and convexity of the objective function, existing distributed ADMM-based approaches cannot guarantee convergence.  In the following section, we present
an ADMM-based algorithm can deal with weakly convex functions, regardless of whether they are smooth.

\section{Moreau envelope ADMM}
The proximal augmented Lagrangian of \eqref{eq:2} can be derived as:
\begin{equation}\label{eq:prox_aug}
  \Psi_{\rho_{\lambda},{\rho_{\beta}}}\left(\mathbf{X},\mathbf{Z},\boldsymbol{\beta}, \boldsymbol{\lambda}\right) = \mathcal{L}_{\rho_{\lambda}}\left(\mathbf{X},\mathbf{Z},\boldsymbol{\lambda}\right)+\frac{\rho_{\beta}}{2} \|\mathbf{Z}-\boldsymbol{\beta}\|_F^2,
\end{equation}
where $\boldsymbol{\beta}=\{\{\boldsymbol{\beta}_{i,j}\}_{j\in \mathcal{N}_i,j>i}\}_{i=1}^{L}$ are auxiliary variables, and $\rho_{\beta}$ is a penalty parameter. In \eqref{eq:prox_aug}, The proximal term plays a crucial role in obtaining convergent results.  It regulates the behavior of the algorithm in both the $\mathbf{Z}$-update step and indirectly in the $\boldsymbol{\lambda}$-update step by incorporating the Moreau envelope function, resulting in provable convergence. According to our proposed proximal ADMM algorithm, the $(k+1)$th iteration is as follows:
\begin{subequations}
 \begin{equation}
    \mathbf{X}^{(k+1)}=
     \argmin_{\mathbf{x}}  \Psi_{\rho_{\lambda},{\rho_{\beta}}}\left(\mathbf{X},\mathbf{Z}^{(k)},\boldsymbol{\beta}^{(k)}, \boldsymbol{\lambda}^{(k)}\right),
\end{equation}
\begin{equation}
    \mathbf{Z}^{(k+1)}=
     \argmin_{\mathbf{Z}}  \Psi_{\rho_{\lambda},{\rho_{\beta}}}\left(\mathbf{X}^{(k+1)},\mathbf{Z},\boldsymbol{\beta}^{(k)}, \boldsymbol{\lambda}^{(k)}\right),
\end{equation}
\begin{equation}\
    \boldsymbol{\beta}^{(k+1)}=\boldsymbol{\beta}^{(k)}-\eta\left(\boldsymbol{\beta}^{(k)}-\boldsymbol{Z}^{(k+1)}\right)
     ,
\end{equation}
\begin{equation}
    \boldsymbol{\lambda}_{i,j}^{i,(k+1)}=\boldsymbol{\lambda}_{i,j}^{i,(k)}+\rho_{\lambda}\left(\mathbf{x}_i^{(k+1)}-\mathbf{z}_{i,j}^{(k+1)}\right), \forall e_{i,j} \in \mathcal{E}
\end{equation}
\begin{equation}
    \boldsymbol{\lambda}_{i,j}^{j,(k+1)}=\boldsymbol{\lambda}_{i,j}^{j,(k)}+\rho_{\lambda}\left(\mathbf{x}_j^{(k+1)}-\mathbf{z}_{i,j}^{(k+1)}\right), \forall e_{i,j} \in \mathcal{E}
\end{equation}
\end{subequations}
 where $\eta \in (0,2)$. 
 
More precisely, each $\mathbf{x}_i$ can be updated individually in  update-step $\mathbf{X}$, which after several simplifications, can be stated as follows:
 \begin{multline}
 \mathbf{x}_i^{(k+1)}=
 \text{Prox}_{f_i}\bigg(\frac{\sum_{j\in \mathcal{N}_i,j>i}\mathbf{z}_{i,j}^{(k)}-\frac{\lambda_{i,j}^{i,(k)}}{\rho_{\lambda}}}{|\mathcal{N}_i|}\\+ \frac{\sum_{j\in \mathcal{N}_i,j<i}\mathbf{z}_{j,i}^{(k)}-\frac{\lambda_{j,i}^{i,(k)}}{\rho_{\lambda}}}{|\mathcal{N}_i|};\frac{1}{\rho_{\lambda}|\mathcal{N}_i|}\bigg)
 \end{multline}
 Also, in the $\mathbf{Z}$-update step, $\mathbf{z}_{i,j}$ can be updated separately as:
\begin{multline}
   \mathbf{z}_{i,j}^{(k+1)}= \argmin_{\mathbf{z}_{i,j}} \bigg( (\lambda_{i,j}^{i,(k)})^{\text{T}}(\mathbf{x}_i^{(k+1)}-\mathbf{z}_{i,j})
   \\
   +(\lambda_{i,j}^{j,(k)})^{\text{T}}(\mathbf{x}_j^{(k+1)}
      -\mathbf{z}_{i,j})+\frac{\rho_{\lambda}}{2}\|\mathbf{x}_j^{(k+1)}-\mathbf{z}_{i,j}\|^2\\+\frac{\rho_{\lambda}}{2}\|\mathbf{x}_i^{(k+1)}-\mathbf{z}_{i,j}\|^2+\frac{\rho_{\beta}}{2}\|\mathbf{z}_{i,j}-\boldsymbol{\beta}_{i,j}^{(k)}\|^2\bigg), 
\end{multline}
which can be simplified as follows:
\begin{equation}
   \mathbf{z}_{i,j}^{(k+1)} =\frac{\rho_{\lambda}\left(\mathbf{x}_j^{(k+1)}+\mathbf{x}_i^{(k+1)}\right)+\rho_{\beta}\boldsymbol{\beta}_{i,j}^{(k)}+\lambda_{i,j}^{i,(k)}+\lambda_{i,j}^{j,(k)}}{2\rho_{\lambda}+\rho_{\beta}}.
\end{equation}
Moreover, for each $\boldsymbol{\beta}_{i,j}$ we have:
\begin{equation}
    \boldsymbol{\beta}_{i,j}^{(k+1)}= \boldsymbol{\beta}_{i,j}^{(k)}-\eta\left(\boldsymbol{\beta}_{i,j}^{(k)}-\boldsymbol{z}_{i,j}^{(k+1)}\right).
\end{equation}

By introducing $\boldsymbol{\lambda}_{i,j}^{i}$, $\boldsymbol{\lambda}_{i,j}^{j}$, $\mathbf{z}_{i,j}$, and $\boldsymbol{\beta}_{i,j}$ to represent $\boldsymbol{\lambda}_{j,i}^{i}$, $\boldsymbol{\lambda}_{j,i}^{j}$, $\mathbf{z}_{j,i}$, and $\boldsymbol{\beta}_{j,i}$, respectively in each agent $i$, the proposed method is simplified and summarized in Algorithm \ref{alg:1}.
\begin{algorithm}[t]
 \caption{Moreau envelope ADMM for distributed optimization (MADM)}
 \label{alg:1}
\SetAlgoLined
 Initialize $\mathbf{X}^{(0)}$, $\mathbf{Z}^{(0)}$, $\boldsymbol{\beta}^{(0)}$, $\boldsymbol{\lambda}^{(0)}$, $\rho_\beta$, $\rho_\lambda$ and $\eta\in (0,2)$\;
 \Repeat{the convergence
criterion is met}
{
\For{$i \in [L]$}{
  Update $\mathbf{x}_i$ as: $\mathbf{x}_i^{k+1}=\text{Prox}_{f_i}\left(\sum_{j\in \mathcal{N}_i}\frac{\mathbf{z}_{i,j}^{(k)}-\frac{\lambda_{i,j}^{i,(k)}}{\rho_{\lambda}}}{|\mathcal{N}_i|};\frac{1}{\rho_{\lambda}|\mathcal{N}_i|}\right)$\;
    }
    Each agent sends its local vector $\mathbf{x}_i^{k+1}$ to neighboring agents\;
    \For{$i \in [L]$}{
    \For{$j \in \mathcal{N}_i$}{
  Update $\mathbf{z}_{i,j}$ as:
  $\mathbf{z}_{i,j}^{(k+1)}= \frac{\rho_{\lambda}\left(\mathbf{x}_j^{(k+1)}+\mathbf{x}_i^{(k+1)}\right)+{\rho_{\beta}\boldsymbol{\beta}_{i,j}^{(k)}}+\lambda_{i,j}^{i,(k)}+\lambda_{i,j}^{j,(k)}}{2\rho_{\lambda}+\rho_{\beta}}$\;
  
  Update $\boldsymbol{\beta}_{i,j}$ as:
  $\boldsymbol{\beta}_{i,j}^{(k)}-\eta\left(\boldsymbol{\beta}_{i,j}^{(k)}-\boldsymbol{z}_{i,j}^{(k+1)}\right)$\;

  Update $\boldsymbol{\lambda}_{i,j}^{i}$ as:
  $\boldsymbol{\lambda}_{i,j}^{i,(k+1)}=\boldsymbol{\lambda}_{i,j}^{i,(k)}+\rho_{\lambda}\left(\mathbf{x}_i^{(k+1)}-\mathbf{z}_{i,j}^{(k+1)}\right)$\;
   Update $\boldsymbol{\lambda}_{i,j}^{j}$ as:
  $\boldsymbol{\lambda}_{i,j}^{j,(k+1)}=\boldsymbol{\lambda}_{i,j}^{j,(k)}+\rho_{\lambda}\left(\mathbf{x}_j^{(k+1)}-\mathbf{z}_{i,j}^{(k+1)}\right)$\;
  }
    }
 }\end{algorithm}

\section{Convergence proof}
This section presents the convergence analysis for Algorithm \ref{alg:1}. Several conventional assumptions are made to build our convergence proof.
\begin{assumption}\label{ass0}
The undirected graph $\mathcal{G}$ is  connected.
\end{assumption}
\begin{assumption}\label{ass1}
$ \Psi_{\rho_{\lambda},{\rho_{\beta}}}\left(\mathbf{X}^{(k)},\mathbf{Z}^{(k)},\boldsymbol{\beta}^{(k)}, \boldsymbol{\lambda}^{(k)}\right)$ is lower bounded, and  $\left(\mathbf{X}^{(k)},\mathbf{Z}^{(k)}, \boldsymbol{\beta}^{(k)}, \boldsymbol{\lambda}^{(k)}\right)$ are bounded, in each iteration $k$.
\end{assumption}
\begin{remark}
It can be shown that for coercive functions\footnote{A function $f(\cdot)$ is coercive if $f(\mathbf{x})\mathbf{x}\rightarrow \infty$ as $\|\mathbf{x}\|\rightarrow\infty$} Assumption \ref{ass1} holds true.
\end{remark}
\begin{assumption}\label{ass2}
 Local objectives $f_i(\cdot), \forall i \in [L]$, are continuous, and weakly convex by parameter $\rho_f$.
\end{assumption}
The proof of convergence relies on a canonical methodology as described in \cite[Theorem 2.9]{attouch2013convergence}. Each algorithm iteration has only one increasing step, which is the $\boldsymbol{\lambda}$-update step. As the gradient of the Moreau envelope is related to the proximal function, the amount of increase in the $\boldsymbol{\lambda}$-update step ($\rho_{\lambda}^{-1}\|\boldsymbol{\lambda}^{(k+1)}-\boldsymbol{\lambda}^{(k)}\|^2$), is bounded based on the primal and auxiliary variables. Thus, tuning the parameters lets us guarantee the {\em sufficient decrease condition} of \cite[Theorem 2.9]{attouch2013convergence}. In addition, the subgradient of the proximal augmented Lagrangian based on each of its inputs can easily be shown to be bound in each iteration, which is sufficient to validate the {\em bounded subgradient condition} of the \cite[Theorem 2.9]{attouch2013convergence}. Finally, by having the boundedness assumption and knowing that the proximal augmented Lagrangian is continuous based on each of its inputs, it can be shown that the {\em continuity condition} of \cite[Theorem 2.9]{attouch2013convergence} holds.
 \begin{figure*}[ht!]
\begin{minipage}[ht!]{0.5\textwidth}
     \centering
     \includegraphics[width=0.9\textwidth]{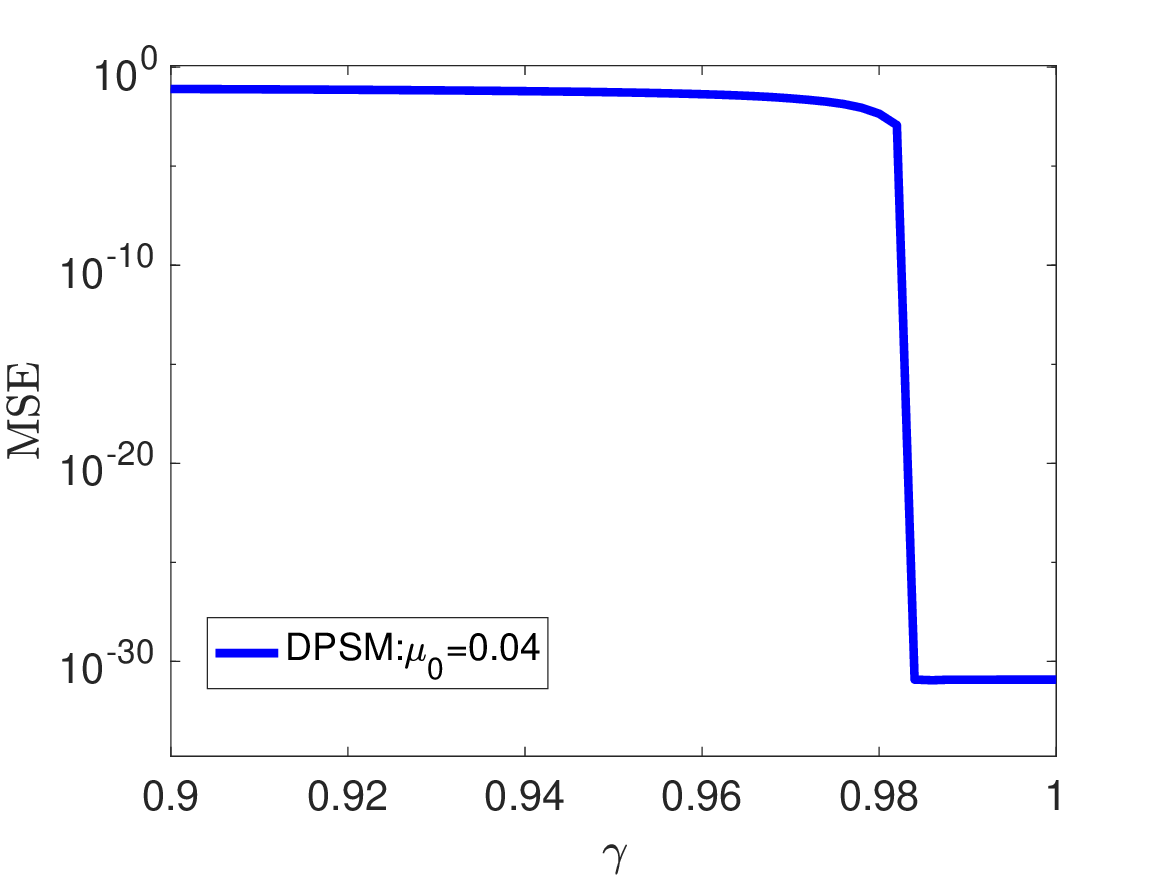}
      \caption{MSE versus $\gamma$}
 \label{fig1}
 \end{minipage}
  \begin{minipage}[ht!]{0.5\textwidth}
     \centering
     \includegraphics[width=0.9\textwidth]{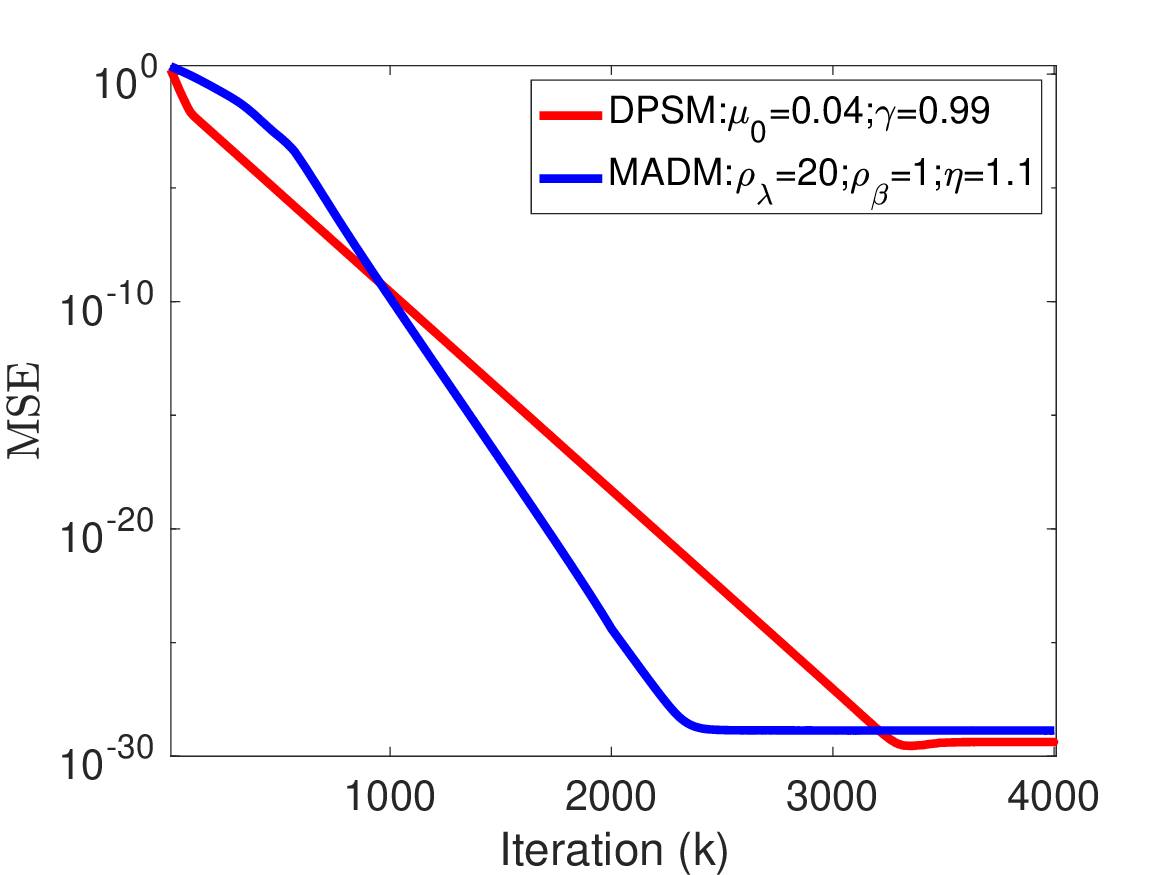}
     \caption{MSE versus iteration}
     \label{fig2}
      \end{minipage}
 \end{figure*}
 
   \begin{lemma}\label{lemma1}
  Function $g(\cdot)$, $\forall \mathbf{u},\mathbf{v} \in \mathbb{R}^{n}$, satisfies  condition:
    \begin{equation}
        \|\nabla \mathcal{M}_{g(\cdot)}(\mathbf{u},\gamma)
        -\nabla \mathcal{M}_{g(\cdot)}(\mathbf{v},\gamma)\|=0.
            \end{equation}
\end{lemma}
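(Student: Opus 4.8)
The plan is to exploit the specific structure of $g$ fixed in the problem formulation, namely $g(\cdot)=0$. Under this choice the Moreau envelope and its proximal operator are completely elementary, and the claimed identity reduces to the observation that $\nabla \mathcal{M}_{g}$ is a constant (in fact the zero) map. I would therefore organize the argument around the standard differentiability theory of the Moreau envelope together with a direct evaluation of the proximal operator of the zero function.

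First I would recall the well-known identity relating the gradient of the Moreau envelope to its proximal operator,
\[
\nabla \mathcal{M}_{g}(\mathbf{w};\gamma) = \frac{1}{\gamma}\big(\mathbf{w} - \text{Prox}_{g}(\mathbf{w};\gamma)\big),
\]
which holds whenever the envelope is continuously differentiable. Since $g\equiv 0$ is convex and smooth, this regularity is valid for every $\gamma>0$, so $\nabla \mathcal{M}_{g}$ is well-defined on all of $\mathbb{R}^{n}$. Next I would evaluate the proximal operator explicitly: with $g\equiv 0$,
\[
\text{Prox}_{g}(\mathbf{w};\gamma) = \argmin_{\mathbf{x}} \frac{1}{2\gamma}\|\mathbf{x}-\mathbf{w}\|^{2} = \mathbf{w},
\]
so substituting into the gradient identity gives $\nabla \mathcal{M}_{g}(\mathbf{w};\gamma)=\mathbf{0}$ for every $\mathbf{w}$. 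Equivalently, one may bypass the identity and simply note that $\mathcal{M}_{g}(\mathbf{w};\gamma)=0$ for all $\mathbf{w}$, whose gradient vanishes identically.

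With the gradient shown to be the constant zero map, the conclusion is immediate: for any $\mathbf{u},\mathbf{v}\in\mathbb{R}^{n}$,
\[
\|\nabla \mathcal{M}_{g}(\mathbf{u},\gamma)-\nabla \mathcal{M}_{g}(\mathbf{v},\gamma)\| = \|\mathbf{0}-\mathbf{0}\| = 0,
\]
which is exactly the asserted condition. Because the result hinges entirely on $g\equiv 0$, there is no real analytical obstacle; the only point deserving a word of care is justifying that the envelope is differentiable so that $\nabla \mathcal{M}_{g}$ is meaningful in the first place, and this is automatic for the convex smooth function $g\equiv 0$. The value of the lemma is conceptual rather than technical: it records that the Moreau envelope attached to the consensus variables has a constant gradient (Lipschitz with constant zero), a fact I expect to be reused when bounding the change in the dual variables and verifying the \emph{sufficient decrease} condition borrowed from \cite[Theorem 2.9]{attouch2013convergence}.
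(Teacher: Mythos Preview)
Your proposal is correct. The paper states Lemma~\ref{lemma1} without proof, treating it as immediate from the definition $g(\cdot)=0$; your argument supplies exactly the obvious justification---that $\mathcal{M}_{g}(\mathbf{w};\gamma)\equiv 0$ (equivalently $\text{Prox}_{g}(\mathbf{w};\gamma)=\mathbf{w}$), so $\nabla\mathcal{M}_{g}$ vanishes identically---and there is nothing more to add.
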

\begin{lemma}\label{lemma2} If Assumption \ref{ass0} is held, for any $m\geq 1$, the following inequality is held:
  \begin{multline}
 \|\boldsymbol{\lambda}^{(k+1)}-\boldsymbol{\lambda}^{(k)}\|_F^2
 \leq \\    \rho_{\beta}^{2}
\left(\|\mathbf{Z}^{(k+1)}-\mathbf{Z}^{(k)}\|_F^2+\|\boldsymbol{\beta}^{(k)}-\boldsymbol{\beta}^{(k-1)}\|_F^2\right)
 \end{multline}
 \end{lemma}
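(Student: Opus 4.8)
The plan is to convert the first-order optimality condition of the $\mathbf{Z}$-subproblem into an explicit algebraic identity for the dual variables, and then to read the increment $\boldsymbol{\lambda}^{(k+1)}-\boldsymbol{\lambda}^{(k)}$ off that identity. Since $g\equiv 0$ (consistent with Lemma~\ref{lemma1}, which records that $\mathcal{M}_{g}$ has a vanishing gradient), each per-edge $\mathbf{z}_{i,j}$-subproblem is a pure quadratic, and its stationarity condition at $\mathbf{z}_{i,j}^{(k+1)}$ reads
\begin{multline*}
\boldsymbol{\lambda}_{i,j}^{i,(k)}+\boldsymbol{\lambda}_{i,j}^{j,(k)}
+\rho_{\lambda}\big(\mathbf{x}_i^{(k+1)}-\mathbf{z}_{i,j}^{(k+1)}\big)\\
+\rho_{\lambda}\big(\mathbf{x}_j^{(k+1)}-\mathbf{z}_{i,j}^{(k+1)}\big)
=\rho_{\beta}\big(\mathbf{z}_{i,j}^{(k+1)}-\boldsymbol{\beta}_{i,j}^{(k)}\big).
\end{multline*}
The two $\rho_\lambda$-terms on the left are exactly the quantities added in the $\boldsymbol{\lambda}$-updates, so the left-hand side collapses to $\boldsymbol{\lambda}_{i,j}^{i,(k+1)}+\boldsymbol{\lambda}_{i,j}^{j,(k+1)}$, yielding the key identity
\[
\boldsymbol{\lambda}_{i,j}^{i,(k+1)}+\boldsymbol{\lambda}_{i,j}^{j,(k+1)}
=\rho_{\beta}\big(\mathbf{z}_{i,j}^{(k+1)}-\boldsymbol{\beta}_{i,j}^{(k)}\big),
\]
valid on every edge and at every iteration. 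This is precisely the mechanism the introduction advertises: the dual variables are pinned to $\rho_\beta$ and to the auxiliary pair $(\mathbf{Z},\boldsymbol{\beta})$ rather than to the uncontrolled primal residuals scaled by $\rho_\lambda$.

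Next I would apply this identity at $k+1$ and at $k$ and subtract, obtaining per edge
\begin{multline*}
\big(\boldsymbol{\lambda}_{i,j}^{i,(k+1)}-\boldsymbol{\lambda}_{i,j}^{i,(k)}\big)
+\big(\boldsymbol{\lambda}_{i,j}^{j,(k+1)}-\boldsymbol{\lambda}_{i,j}^{j,(k)}\big)\\
=\rho_{\beta}\Big(\big(\mathbf{z}_{i,j}^{(k+1)}-\mathbf{z}_{i,j}^{(k)}\big)
-\big(\boldsymbol{\beta}_{i,j}^{(k)}-\boldsymbol{\beta}_{i,j}^{(k-1)}\big)\Big).
\end{multline*}
Taking squared norms of the right-hand side, applying $\|\mathbf{a}-\mathbf{b}\|^2\le 2\|\mathbf{a}\|^2+2\|\mathbf{b}\|^2$, and summing over all edges produces a right-hand side of exactly the claimed form $\rho_\beta^2\big(\|\mathbf{Z}^{(k+1)}-\mathbf{Z}^{(k)}\|_F^2+\|\boldsymbol{\beta}^{(k)}-\boldsymbol{\beta}^{(k-1)}\|_F^2\big)$. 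Optimistically, the factor $2$ from Young's inequality is meant to be compensated by the factor $\tfrac12$ relating the summed increment to the two individual per-edge squared terms; whether that compensation is exact is the delicate point I turn to next.

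The step I expect to be the main obstacle is precisely this redistribution: the $\mathbf{Z}$-optimality condition controls only the \emph{sum} $\boldsymbol{\lambda}_{i,j}^{i}+\boldsymbol{\lambda}_{i,j}^{j}$ on each edge, whereas $\|\boldsymbol{\lambda}^{(k+1)}-\boldsymbol{\lambda}^{(k)}\|_F^2$ asks for the sum of the two \emph{individual} squared increments. Writing $\mathbf{d}^{i}$ and $\mathbf{d}^{j}$ for the per-edge dual increments, the identity delivers $\mathbf{d}^{i}+\mathbf{d}^{j}$, while the $\boldsymbol{\lambda}$-updates force $\mathbf{d}^{i}-\mathbf{d}^{j}=\rho_\lambda(\mathbf{x}_i^{(k+1)}-\mathbf{x}_j^{(k+1)})$; since $\|\mathbf{d}^{i}\|^2+\|\mathbf{d}^{j}\|^2=\tfrac12\|\mathbf{d}^{i}+\mathbf{d}^{j}\|^2+\tfrac12\|\mathbf{d}^{i}-\mathbf{d}^{j}\|^2$, the symmetric part closes against the claimed bound but the antisymmetric part leaves a residual $\tfrac12\rho_\lambda^2\|\mathbf{x}_i^{(k+1)}-\mathbf{x}_j^{(k+1)}\|^2$. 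To finish, I would therefore either (i) track only the symmetric component of the dual variables, which is what the sufficient-decrease estimate downstream actually consumes, or (ii) bound the per-edge consensus gap $\mathbf{x}_i^{(k+1)}-\mathbf{x}_j^{(k+1)}$ separately using the weak convexity of the $f_i$ (equivalently the Lipschitz continuity of $\nabla\mathcal{M}_{f_i}$) from Assumption~\ref{ass2} together with the connectedness of $\mathcal{G}$ from Assumption~\ref{ass0}. Confirming that this residual is either absent by construction or dominated by the right-hand side is the crux on which the stated constant $\rho_\beta^2$ ultimately rests.
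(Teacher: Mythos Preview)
Your derivation of the per-edge identity
\[
\boldsymbol{\lambda}_{i,j}^{i,(k+1)}+\boldsymbol{\lambda}_{i,j}^{j,(k+1)}
=\rho_{\beta}\big(\mathbf{z}_{i,j}^{(k+1)}-\boldsymbol{\beta}_{i,j}^{(k)}\big)
\]
from the $\mathbf{Z}$-optimality condition together with the dual updates is exactly the mechanism the paper is leaning on; the paper's own proof simply invokes Lemma~\ref{lemma1} and defers the rest to \cite[Lemma~4]{zeng2022moreau}, so your argument is in fact more explicit than what appears in the text.

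Where your proposal remains genuinely incomplete is precisely the point you flag yourself. The $\mathbf{Z}$-stationarity pins only the symmetric combination $\mathbf{d}^{i}+\mathbf{d}^{j}$, while $\|\boldsymbol{\lambda}^{(k+1)}-\boldsymbol{\lambda}^{(k)}\|_F^2$ also contains the antisymmetric part $\tfrac12\|\mathbf{d}^{i}-\mathbf{d}^{j}\|^2=\tfrac12\rho_\lambda^2\|\mathbf{x}_i^{(k+1)}-\mathbf{x}_j^{(k+1)}\|^2$, which is controlled neither by $\rho_\beta$ nor by the $(\mathbf{Z},\boldsymbol{\beta})$ increments on the right-hand side of the lemma. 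Your option~(i), ``track only the symmetric component,'' changes the statement rather than proving it; your option~(ii), bounding $\|\mathbf{x}_i^{(k+1)}-\mathbf{x}_j^{(k+1)}\|$ via weak convexity and connectedness, would at best inject $\rho_\lambda$- and $\mathbf{X}$-dependent terms that the stated bound does not contain. So as written the proposal stops short of the claimed inequality with the claimed constant. The paper does not resolve this tension in the text either; it is absorbed into the cited external lemma, and matching that result would require either an additional invariant on the antisymmetric dual component (e.g., a symmetric initialization that is preserved) or accepting a weaker inequality than the one displayed.
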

 
 \begin{proof}
  The lemma is proved by combining Lemma \ref{lemma1} with \cite[Lemma 4]{zeng2022moreau}. \end{proof}
 \begin{lemma}\label{lemma3}
  Assuming Assumptions \ref{ass0}, \ref{ass1}, and \ref{ass2} and $\rho_{\lambda}|\mathcal{N}_i|>\rho_{f}, \forall i \in [L]$, the following inequality holds:
   \begin{subequations}
 \begin{multline}
  \Psi_{\rho_{\lambda},{\rho_{\beta}}}\left(\mathbf{X}^{(k)},\mathbf{Z}^{(k)},\boldsymbol{\beta}^{(k)}, \boldsymbol{\lambda}^{(k)}\right)-\\ \Psi_{\rho_{\lambda},{\rho_{\beta}}}\left(\mathbf{X}^{(k+1)},\mathbf{Z}^{(k+1)},\boldsymbol{\beta}^{(k+1)},\boldsymbol{\lambda}^{(k+1)}\right) \geq
\end{multline}
  \begin{equation}\label{20b}
  C(\rho_{\lambda})\|\mathbf{X}^{(k+1)}-\mathbf{X}^{(k)}\|_F^2 
   +\left(\frac{\rho_{\lambda}}{2}+\frac{\rho_{\beta}}{2}\right)\|\mathbf{Z}^{(k+1)}-\mathbf{Z}^{(k)}\|_F^2 
  \end{equation}
  \begin{equation}\label{20d}
   +\frac{\rho_{\beta}}{2}\left(\frac{2}{\eta}-1\right)\|\boldsymbol{\beta}^{(k+1)}
  -\boldsymbol{\beta}^{(k)}\|_F^2 \quad 
   \end{equation}
    \begin{equation}\label{20e}
  - \rho_{\lambda}^{-1}   \rho_{\beta}^{2}
\left(\|\mathbf{Z}^{(k+1)}-\mathbf{Z}^{(k)}\|_F^2+\|\boldsymbol{\beta}^{(k)}-\boldsymbol{\beta}^{(k-1)}\|_F^2\right),    \end{equation}
 \end{subequations}
 where $C(\cdot)$ is a function with positive value for $\rho_{\lambda}$.
 \end{lemma}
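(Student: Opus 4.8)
The plan is to write the one-step change of the proximal augmented Lagrangian as a telescoping sum over the four block updates and to bound each summand separately. Writing $\Psi^{(k)}=\Psi_{\rho_{\lambda},\rho_{\beta}}(\mathbf{X}^{(k)},\mathbf{Z}^{(k)},\boldsymbol{\beta}^{(k)},\boldsymbol{\lambda}^{(k)})$ and inserting the intermediate iterates, I would split
\[
\Psi^{(k)}-\Psi^{(k+1)}=\Delta_{X}+\Delta_{Z}+\Delta_{\beta}+\Delta_{\lambda},
\]
where $\Delta_{X},\Delta_{Z},\Delta_{\beta},\Delta_{\lambda}$ are the changes produced by updating only the $\mathbf{X}$, $\mathbf{Z}$, $\boldsymbol{\beta}$, $\boldsymbol{\lambda}$ block respectively, holding the remaining blocks fixed at their current values. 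Because each difference then involves a single block, each can be handled by a convexity or a direct-computation argument, and the first three will be shown to be nonnegative while the fourth (the dual step) is the only increasing one.

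For the $\mathbf{X}$-block I would use that, as a function of $\mathbf{X}$ with $(\mathbf{Z}^{(k)},\boldsymbol{\beta}^{(k)},\boldsymbol{\lambda}^{(k)})$ fixed, $\Psi$ separates across agents into $f_i(\mathbf{x}_i)$ plus the quadratic penalties $\tfrac{\rho_{\lambda}}{2}\|\mathbf{x}_i-\mathbf{z}\|^2$ attached to the $|\mathcal{N}_i|$ incident edges. These penalties contribute curvature $\rho_{\lambda}|\mathcal{N}_i|$, whereas Assumption~\ref{ass2} lower-bounds the curvature of $f_i$ by $-\rho_f$; under the hypothesis $\rho_{\lambda}|\mathcal{N}_i|>\rho_{f}$ the per-agent subproblem is strongly convex, and since $\mathbf{x}_i^{(k+1)}$ is its minimizer, the minimizer inequality gives $\Delta_{X}\geq C(\rho_{\lambda})\|\mathbf{X}^{(k+1)}-\mathbf{X}^{(k)}\|_F^2$ with $C(\rho_{\lambda})$ a positive multiple of $\min_i(\rho_{\lambda}|\mathcal{N}_i|-\rho_f)$. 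Applying the same strong-convexity-at-the-minimizer inequality to the $\mathbf{Z}$-block, whose per-edge curvature is $2\rho_{\lambda}+\rho_{\beta}$, produces the coefficient of $\|\mathbf{Z}^{(k+1)}-\mathbf{Z}^{(k)}\|_F^2$ in \eqref{20b}.

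The $\boldsymbol{\beta}$-block is an exact quadratic identity: with $(\mathbf{X}^{(k+1)},\mathbf{Z}^{(k+1)},\boldsymbol{\lambda}^{(k)})$ fixed, $\Psi$ depends on $\boldsymbol{\beta}$ only through $\tfrac{\rho_{\beta}}{2}\|\mathbf{Z}^{(k+1)}-\boldsymbol{\beta}\|_F^2$, and substituting the relaxation step $\boldsymbol{\beta}^{(k)}-\mathbf{Z}^{(k+1)}=\eta^{-1}(\boldsymbol{\beta}^{(k)}-\boldsymbol{\beta}^{(k+1)})$ yields $\Delta_{\beta}=\tfrac{\rho_{\beta}}{2}(\tfrac{2}{\eta}-1)\|\boldsymbol{\beta}^{(k+1)}-\boldsymbol{\beta}^{(k)}\|_F^2$, which is the term \eqref{20d} and is nonnegative for $\eta\in(0,2)$. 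The $\boldsymbol{\lambda}$-block is affine in $\boldsymbol{\lambda}$, and since the residuals $\mathbf{x}_i^{(k+1)}-\mathbf{z}_{i,j}^{(k+1)}$ are exactly the $\boldsymbol{\lambda}$-gradient of $\Psi$ while the dual update sets $\boldsymbol{\lambda}^{(k+1)}-\boldsymbol{\lambda}^{(k)}=\rho_{\lambda}(\text{residuals})$, a direct expansion gives $\Delta_{\lambda}=-\rho_{\lambda}^{-1}\|\boldsymbol{\lambda}^{(k+1)}-\boldsymbol{\lambda}^{(k)}\|_F^2$. Bounding $\|\boldsymbol{\lambda}^{(k+1)}-\boldsymbol{\lambda}^{(k)}\|_F^2$ by Lemma~\ref{lemma2} converts this into exactly the negative term \eqref{20e}.

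Summing the four bounds yields the claimed inequality. I expect the $\mathbf{X}$-block to be the main obstacle: it is the only step where the objective is non-convex, so establishing a genuine \emph{decrease} (rather than mere first-order optimality) requires that the augmented curvature $\rho_{\lambda}|\mathcal{N}_i|$ dominate the weak-convexity modulus $\rho_f$, which is precisely what forces the hypothesis $\rho_{\lambda}|\mathcal{N}_i|>\rho_f$ and pins down the positive function $C(\rho_{\lambda})$. The remaining work is bookkeeping: verifying that the telescoping intermediate points are consistent so each $\Delta$ involves only one block, and that Lemma~\ref{lemma2}—which already absorbs the Moreau-envelope/proximal relation through Lemma~\ref{lemma1}—is invoked at the correct indices $k$ and $k-1$.
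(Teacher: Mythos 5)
Your proposal follows essentially the same route as the paper's (very terse) proof: a block-wise telescoping decomposition in which the $\mathbf{X}$- and $\mathbf{Z}$-terms of \eqref{20b} come from (weak/strong) convexity of the subproblems, \eqref{20d} from directly expanding the $\boldsymbol{\beta}$-relaxation step, and \eqref{20e} from writing the dual ascent increase as $\rho_{\lambda}^{-1}\|\boldsymbol{\lambda}^{(k+1)}-\boldsymbol{\lambda}^{(k)}\|_F^2$ and bounding it via Lemma~\ref{lemma2}. Your strong-convexity argument for the $\mathbf{Z}$-block actually yields the larger coefficient $\rho_{\lambda}+\tfrac{\rho_{\beta}}{2}$, which dominates the stated $\tfrac{\rho_{\lambda}}{2}+\tfrac{\rho_{\beta}}{2}$, so the claimed inequality still follows.
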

 \begin{proof}
 Equation \eqref{20b} is derived based on the weak convexity of local functions, and $g(\cdot)$, while  \eqref{20d} is the result of expanding the amount of change from the $\boldsymbol{\beta}$-update step. Further, \eqref{20e} derives from multiplying the bound obtained from Lemma \ref{lemma2} with $-\rho_{\lambda}^{-1}$, which gives an upper bound for the amount of change in the $\boldsymbol{\lambda}$-update step. \end{proof}
\begin{theorem}\label{theo1}
By having $\rho_{\lambda}|\mathcal{N}_i|>\rho_{f}, \forall i \in [L]$, $\frac{1}{\eta}\geq \frac{1}{2}+  \rho_{\lambda}^{-1} \rho_{\beta}$, and $\rho_{\lambda} \geq   \frac{(2\sqrt{2}-1)}{2}\rho_{\beta}$, if Assumptions \ref{ass0},\ref{ass1}, and \ref{ass2} hold, the algorithm \ref{alg:1} converges to a stationary point. 
\end{theorem}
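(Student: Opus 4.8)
The plan is to cast Algorithm~\ref{alg:1} into the abstract descent framework of \cite[Theorem 2.9]{attouch2013convergence} by verifying its three hypotheses — sufficient decrease, bounded (relative) subgradient, and continuity — for the merged iterate $u^{(k)}=(\mathbf{X}^{(k)},\mathbf{Z}^{(k)},\boldsymbol{\beta}^{(k)},\boldsymbol{\lambda}^{(k)})$, and then to quote that theorem. The only feature of Lemma~\ref{lemma3} that is not immediately in descent form is the lagged term $\|\boldsymbol{\beta}^{(k)}-\boldsymbol{\beta}^{(k-1)}\|_F^2$, so I would first absorb it into a Lyapunov function. Define
\[
\Phi^{(k)} := \Psi_{\rho_{\lambda},\rho_{\beta}}\!\left(\mathbf{X}^{(k)},\mathbf{Z}^{(k)},\boldsymbol{\beta}^{(k)},\boldsymbol{\lambda}^{(k)}\right) + \rho_{\lambda}^{-1}\rho_{\beta}^{2}\,\|\boldsymbol{\beta}^{(k)}-\boldsymbol{\beta}^{(k-1)}\|_F^2 ,
\]
and write $\Delta\mathbf{X},\Delta\mathbf{Z},\Delta\boldsymbol{\beta}$ for the successive differences. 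Forming $\Phi^{(k)}-\Phi^{(k+1)}$ and inserting Lemma~\ref{lemma3} cancels the lagged term against the fresh $\|\boldsymbol{\beta}^{(k+1)}-\boldsymbol{\beta}^{(k)}\|_F^2$ contribution of \eqref{20d} and leaves $\Phi^{(k)}-\Phi^{(k+1)}\geq C(\rho_{\lambda})\|\Delta\mathbf{X}\|_F^2 + c_{\mathbf{Z}}\|\Delta\mathbf{Z}\|_F^2 + c_{\boldsymbol{\beta}}\|\Delta\boldsymbol{\beta}\|_F^2$.

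The three parameter conditions are exactly what make these coefficients positive. The requirement $\rho_{\lambda}|\mathcal{N}_i|>\rho_f$ makes each $\mathbf{x}_i$-proximal subproblem strongly convex, since the proximal weight dominates the weak-convexity constant of Assumption~\ref{ass2}, giving $C(\rho_{\lambda})>0$; the step-size bound $\tfrac{1}{\eta}\geq \tfrac12+\rho_{\lambda}^{-1}\rho_{\beta}$ is precisely the inequality keeping $c_{\boldsymbol{\beta}}=\tfrac{\rho_{\beta}}{2}\!\left(\tfrac2\eta-1\right)-\rho_{\lambda}^{-1}\rho_{\beta}^{2}\geq 0$; and $\rho_{\lambda}\geq \tfrac{2\sqrt2-1}{2}\rho_{\beta}$ is the admissible root of the quadratic in $\rho_{\lambda}/\rho_{\beta}$ that keeps $c_{\mathbf{Z}}$ strictly positive once the $\boldsymbol{\lambda}$-increment bound of Lemma~\ref{lemma2} has been charged against the $\mathbf{Z}$-difference in \eqref{20e}. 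Summing over $k$ and using the lower bound of Assumption~\ref{ass1} then shows $\Phi^{(k)}$ converges and that $\sum_k\!\big(\|\Delta\mathbf{X}\|_F^2+\|\Delta\mathbf{Z}\|_F^2+\|\Delta\boldsymbol{\beta}\|_F^2\big)<\infty$, so all successive differences vanish, and by Lemma~\ref{lemma2} so does $\|\boldsymbol{\lambda}^{(k+1)}-\boldsymbol{\lambda}^{(k)}\|_F$. This establishes the sufficient-decrease hypothesis.

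For the relative-error hypothesis I would exhibit, at each iterate, an element of the limiting subdifferential $\partial\Phi(u^{(k+1)})$ whose norm is controlled by the successive differences. This is read off from the first-order optimality conditions of the $\mathbf{X}$- and $\mathbf{Z}$-subproblems together with the $\boldsymbol{\beta}$- and $\boldsymbol{\lambda}$-update rules: the prox optimality condition supplies a subgradient of $f_i$ (well defined under Assumption~\ref{ass2}), while Lemma~\ref{lemma1} disposes of the $g$ block since $g\equiv 0$ has a constant Moreau gradient. Collecting terms and invoking Lemma~\ref{lemma2} once more yields $\mathrm{dist}\!\left(0,\partial\Phi(u^{(k+1)})\right)\leq b\,\big(\|\Delta\mathbf{X}\|_F+\|\Delta\mathbf{Z}\|_F+\|\Delta\boldsymbol{\beta}\|_F\big)$ for a finite $b$. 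Finally, $\Psi_{\rho_{\lambda},\rho_{\beta}}$ is continuous in all its arguments (a sum of continuous $f_i$ and smooth quadratics) and, by Assumption~\ref{ass1}, the iterates stay bounded, so the continuity hypothesis holds. Applying \cite[Theorem 2.9]{attouch2013convergence} then gives that every cluster point of $u^{(k)}$ is a critical point; because the added lagged term vanishes in the limit and the $\boldsymbol{\lambda}$- and $\boldsymbol{\beta}$-increments force the consensus residuals to zero, this cluster point is a stationary point of \eqref{eq:2}.

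I expect the relative-error step, rather than the descent step, to be the main obstacle. In the smooth ADMM literature one differentiates the coupling term to bound the dual increment, but here $f_i$ is only weakly convex and possibly nonsmooth, so the argument must be carried out entirely with the limiting subdifferential and must route the dual increment through the Moreau-envelope/proximal identity of Lemmas~\ref{lemma1} and~\ref{lemma2}. Obtaining a clean, iterate-difference-controlled bound there, while simultaneously keeping $c_{\mathbf{Z}}$ strictly positive — which is what pins down the constant $\tfrac{2\sqrt2-1}{2}$ — is the delicate part of the proof.
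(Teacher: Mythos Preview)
Your proposal is correct and follows essentially the same route as the paper: verify the three hypotheses of \cite[Theorem 2.9]{attouch2013convergence} using Lemma~\ref{lemma3} for sufficient decrease, the optimality conditions and Lemma~\ref{lemma2} for the bounded-subgradient condition, and Assumption~\ref{ass1} plus continuity of $\Psi_{\rho_{\lambda},\rho_{\beta}}$ for the continuity condition. Your explicit Lyapunov shift $\Phi^{(k)}=\Psi+\rho_{\lambda}^{-1}\rho_{\beta}^{2}\|\boldsymbol{\beta}^{(k)}-\boldsymbol{\beta}^{(k-1)}\|_F^2$ to absorb the lagged term in \eqref{20e}, and your identification of each parameter condition with the positivity of one coefficient, are natural elaborations that the paper's terse sketch leaves implicit but which are exactly what is needed to make the descent step go through.
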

\begin{proof}
The {\em sufficient decrease condition} of \cite[Theorem 2.9]{attouch2013convergence} holds when $\rho_{\lambda}$, $\rho_{\beta}$, and $\rho_{\beta}$ satisfy the condition of Theorem \ref{theo1}, by Lemma \ref{lemma3}. The same results are obtained for the {\em bounded subgradient condition} of \cite[Theorem 2.9]{attouch2013convergence} when it depends on the norm of the successive difference of the variables. Finally, employing Assumption \ref{ass1} and knowing that the proximal augmented Lagrangian is continuous for each of its inputs, we can prove the {\em continuity condition} of \cite[Theorem 2.9]{attouch2013convergence}, which completes  the proof.
\end{proof}
\section{Simulation results}
In this section, we evaluate the performance of the MADM by conducting simulations of distributed robust phase retrieval with the objective function:
\begin{equation}
    \hat{\mathbf{x}}=\min_{\mathbf{x}} \frac{1}{m} \sum_{i=1}^L |\langle \mathbf{a}_{i}, \mathbf{x}\rangle^2 - b_{i}^2|,
\end{equation}
where $\mathbf{x}$ is the target signal, $\mathbf{a}_{i}$ is the  measurement and $b_{i}$ is the observation in node $i$. We assume that each node $i$ has one measurement and observation, $ \mathbf{a}_i \in \mathbb{R}^N \distas{\text{i.i.d}} \mathcal{N}(\mathbf{0},\mathbf{1}), \forall i \in [L]$, and $ \mathbf{x} \in \mathbb{R}^N \sim \mathcal{N}(\mathbf{0},\mathbf{1})$. For simplicity, we assume a noiseless setting with $b_i =\langle \mathbf{a}_{i}, \mathbf{x}\rangle ,\forall i \in [L]$. All simulations are performed by averaging over $50$ trials, and in each case, an Erdös-Rényi graph consisting of $L=50$ nodes was generated as the communication network. To evaluate the performance of this method, we also simulated the distributed projected subgradient method (DPSM) proposed in \cite{chen2021distributed}.  The mean square error $
\text{(MSE)}:=\frac{\sum_{i=1}^L||\mathbf{\hat{x}}_i-\mathbf{x}||_2^2}{L}
$ was utilized as the performance measure. Moreover,  $\mathbf{z}^{(0)}_{i,j}, \forall e_{i,j}\in \mathcal{E}$ in MADM and $\mathbf{x}^{(0)}_i,  \forall i \in \mathcal{V}$ in DPSM were initialized based on the procedure proposed in \cite[Sec. 4.2]{candes2015phase}.


We first compare the convergence rate and efficiency of the two algorithms. The dimension of the target signal was $N=10$, and for the DPSM, $\mu_0$ was $0.04$, while $\gamma$ was chosen through a grid search to achieve a minimum and stable error with fast convergence. Fig. \ref{fig1} illustrates the results of the grid search. We see that the DPSM highly depends on the choice of $\gamma$. In our algorithm we set  $\rho_{\lambda}=20$, $\rho_{\beta}=1$, and $\eta=1.1$.  These values satisfy the conditions in Theorem \ref{theo1}. Fig. \ref{fig2} shows that MADM can achieve a faster convergence rate than DPSM while maintaining a similar MSE.

Next, we study the robustness of the algorithms as a function of the dimension of the target signal for fixed parameters; $N$ ranged from $1$ to $20$. A comparison of MADM and DPSM behavior under different dimensions is illustrated in Fig. \ref{fig3}. We see that MADM is more stable than DPSM when parameters are fixed. Although Fig. \ref{fig1} indicates $\gamma=0.99$ is in the safe zone for $N=10$, it fails for $N=8$ and $N>10$.
\begin{figure}[t!]
      \centering
      \includegraphics[width=0.45\textwidth]{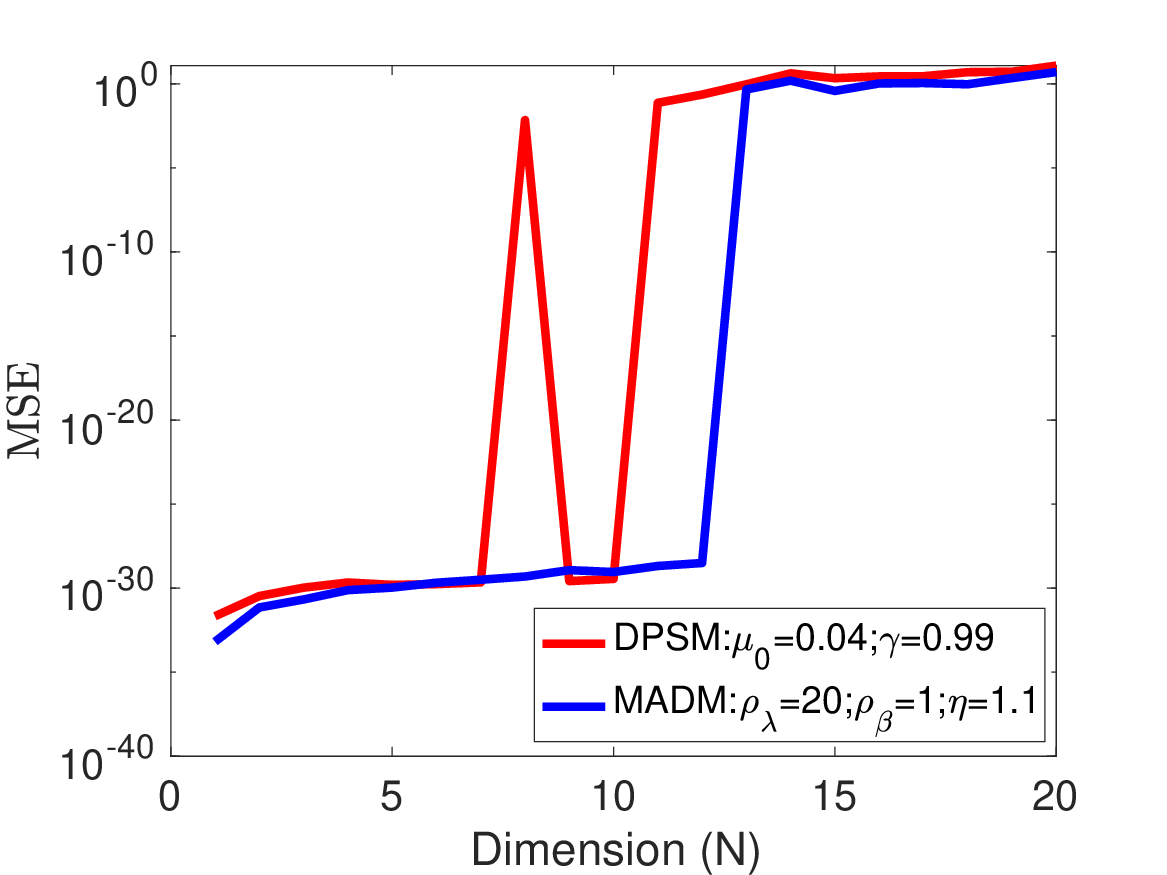}
    \caption{MSE versus dimension}
     \label{fig3}
 \end{figure}
\section{Conclusions}
This paper presented a new proximal variant of the ADMM algorithm, named MADM, for solving distributed optimization problems. Our analysis demonstrated that the proposed method could be applied to weakly convex functions under mild conditions. In particular, we derived a bound on the change in the dual variable update step by leveraging the relationship between the gradient of the Moreau envelope function and the proximal function. This allowed us to ensure convergence to a stationary point. The simulation results showed that MADM outperforms subgradient methods in terms of speed and robustness. These findings suggest that MADM can be a promising tool for solving a wide range of distributed optimization problems in practice. 

\bibliographystyle{IEEEtran}
\bibliography{strings}



\end{document}